\documentclass[reqno]{amsart}
\usepackage{amsmath,amssymb,amsfonts,caption}
\usepackage{graphicx}
\usepackage{pstricks}
\usepackage{euscript}
\usepackage{enumerate}
\usepackage{verbatim}
\usepackage{epsfig}

\newtheorem{theorem}{Theorem}

\newtheorem{definition}{Definition}


\newcommand{\eps}{\varepsilon}
\renewcommand{\phi}{\varphi}
\renewcommand{\le}{\leqslant}
\renewcommand{\ge}{\geqslant}
\newcommand{\ds}{\, ds}
\newcommand{\dr}{\, dr}

\newcommand{\dt}{\, dt}

\newcommand{\dint}{\displaystyle \int}

\DeclareMathOperator{\codim}{codim}
\DeclareMathOperator{\imagem}{Im}
\DeclareMathOperator{\sign}{sign}

\DeclareMathOperator{\e}{e}

\usepackage{dsfont}
   
   \newcommand{\R}{\ensuremath{\mathds R}}
\begin{document}
\title[Existence of periodic solutions for periodic eco-epidemic models]
   {Existence of periodic solutions for periodic eco-epidemic models with disease in the Prey}
\date{\today}
\author{C\'esar M. Silva}
\address{
   C\'esar M. Silva\\
   Departamento de Matem\'atica and CMA-UBI\\
   Universidade da Beira Interior\\
   6201-001 Covilh\~a\\
   Portugal}
\email{csilva@ubi.pt}
\thanks{C. Silva was partially supported by FCT through CMUBI (project UID/MAT/00212/2013)}
\subjclass[2010]{92D30, 34D05, 37C27, 37B55} \keywords{Epidemic model, periodic, stability}
\begin{abstract}
For an eco-epidemic model with disease in the prey and periodic coefficients it is conjectured in [Xingge Niu, Tailei Zhang, Zhidong Teng, The asymptotic behavior of a nonautonomous eco-epidemic model with disease in the prey, Applied Mathematical Modelling 35, 457-470 (2011)] that, when the infected prey is permanent, there is a positive periodic orbit that is globally asymptotically stable in the interior of $(\R^+)^3$. In this paper we prove the existence part of the conjecture.
\end{abstract}
\maketitle
\section{Introduction}
 Lotka-Volterra models, that describe the predator-prey interaction, and epidemic models, that describe the spread of transmissible diseases among some population, are two major subjects of study in mathematical biology.

In the natural world, infected individuals become weaker and easier to be predated. Thus, in the predator-prey interaction, diseases cannot be ignored. Models that describe the spread of a disease in ecological systems are seldom referred to as eco-epidemiological models and are obtained by adding infected compartments to a Lotka-Volterra system. Several works concerning eco-epidemiological models have appeared recently: existence of Hopf bifurcations was studied in~\cite{Mukherjee-AMC-2010} and, for a model with stage structure, stability and existence of Hopf bifurcations were discussed in~\cite{Shi-Cui-Zhou-NA-2011}; for models with impulsive birth, it was established the existence of positive periodic solutions assuming that the birth pulse is strong enough in~\cite{Kang-Xue-Jin-JMAA-2008} and the existence of a globally stable prey eradication solution when the impulsive period is less than some critical value as well as a sufficient condition for permanence of the disease were obtained in~\cite{Liu-Zhang-Lu-JAMC-2012}; for a model with distributed time delay and impulsive control, conditions for the local and global asymptotical stability of the prey eradication periodic solution and the permanence of the disease were discussed in~\cite{Zou-Xiong-Shu-JFI-2011}. We emphasize that all the works above refer to models with constant parameters.

In this work we will consider the periodic version of the non-autonomous eco-epidemiological model already considered in~\cite{Niu-Zhang-Teng-AMM-2011}:
\begin{equation}\label{eq:ProblemaPrincipal}
\begin{cases}
S'=\Lambda(t)-\beta(t)\,SI-\mu(t) S\\
I'=\beta(t)\,SI-c(t)I-\eta(t)YI\\
Y'=Y(r(t)-b(t)Y+k(t)\eta(t)I)
\end{cases}.
\end{equation}
This model assumes that the total prey population is divided in two population classes: the class $S$ of susceptible prey and the class $I$ of infected prey. The remaining class, $Y$, corresponds to the predator population. The parameters in the model are the following: $\Lambda(t)$ is the recruited rate of the prey population (including newborns and migratory), $d(t)$ is the natural death rate of the prey population, $c(t)$ is the death rate among the infected prey population and includes the natural death rate and the disease-related death rate (naturally, $c(t) \ge d(t)$ for all $t$), $\beta(t)$ is the incidence rate, $r(t)$ is the intrinsic birth rate of the predators, $\eta(t)$ is the predation rate and $k(t)$ is the rate of converting prey into predator.

Several assumptions, reflected in the model's equations, were made. It was assumed that, in the absence of disease, the growth rate of the prey population is given by the solutions of $x'=\Lambda(t)-\mu(t)x$ and that the predator population grows according to a logistic curve. Another assumption is that the infected prey is removed by death or by predation before having the possibility of reproducing. Additionally, it is also assumed that the disease is not transmissible to the predator population, that the disease is not genetically inherited and that the infected population do not recover or become immune.

In~\cite{Niu-Zhang-Teng-AMM-2011}, conditions for the permanence and extinction of the infective prey as well as sufficient conditions for the global stability of the model were obtained. Also in that paper, in the end of section 4, the authors leave a conjecture. Namely, the authors conjecture that, if the parameter functions in~\eqref{eq:ProblemaPrincipal} are $\omega$-periodic, continuous and bounded functions on $\R_0^+$ (with all but $r$ being nonnegative and all but $r$, $\eta$ and $c$ having positive average) and if $\mathcal R>1$, where
 $$\mathcal R = \dfrac{\overline{\beta s_0}}{\overline c+\overline{\eta y_0}}$$
and $s_0$ and $y_0$ are the unique positive periodic solutions of $s'=\Lambda(t)-\mu(t)s$ and $y'=y(r(t)-b(t)y)$, then model~\eqref{eq:ProblemaPrincipal} has a positive periodic solution which is globally attractive in the interior of first octant. In what follows we will prove the existence part of this conjecture. Additionally, we will also consider the extinction situation.

It follows from the results in~\cite{Niu-Zhang-Teng-AMM-2011} that, in the periodic context, if $\mathcal R > 1$, the infected prey in system~\eqref{eq:ProblemaPrincipal} is permanent and, if $\mathcal R \le 1$, the infected prey in system~\eqref{eq:ProblemaPrincipal} goes to extinction. Thus $\mathcal R$ is a sharp threshold between permanence and extinction of the disease. In this work we prove that, in the periodic context, $\mathcal R$ is also a sharp threshold between existence of exactly two disease-free periodic orbits that contain the $\omega$-limit of all solutions and the coexistence of the referred disease free orbits (that become unstable) with at least one endemic periodic orbit.

The prove of our result on existence of periodic orbits relies on the famous Mawhin continuation theorem~\cite{Caines-Mawhin-NDE-1977}. To obtain our sharp result instead of a non-optimal condition, it is fundamental to use the permanence of the infectives already established in~\cite{Niu-Zhang-Teng-AMM-2011}. To discuss the stability of the disease-free orbit, we use the theory developed in~\cite{Wang-Zhao-JDDE-2008}.

\section{Existence and stability of disease-free periodic orbits}
Given a continuous and $\omega$-periodic function $f:\R_0^+ \to \R$, we define
$$\overline{f}=\frac{1}{\omega}\int_0^\omega f(s) \ds, \quad
f^u=\max_{t \in [0,\omega]} f(t) \quad \text{and} \quad
f^\ell=\min_{t \in [0,\omega]} f(t)$$

We assume that the following conditions hold:
\begin{enumerate}[C1)]
\item \label{eq:cond-1} $\Lambda$, $\beta$, $\mu$, $c$, $\eta$ and $k$ are $\omega$-periodic, nonnegative, continuous and bounded functions on $\R_0^+$ and $r$ is $\omega$-periodic, continuous and bounded function on $\R_0^+$;
\item \label{eq:cond-2} $\overline \Lambda>0$, $\overline \mu>0$, $\overline r>0$, $\overline b>0$ and $\overline \beta>0$.
\end{enumerate}

We also need to consider the auxiliary equations
\begin{equation}\label{sist-aux-1}
s'=\Lambda(t)-\mu(t)s
\end{equation}
and
\begin{equation}\label{sist-aux-2}
y'=y(r(t)-b(t)y).
\end{equation}

\begin{theorem}\label{teo:Df1}
  System~\eqref{eq:ProblemaPrincipal} has two disease-free periodic orbits, of period $\omega$, namely the orbits
  $\mathcal O_1$ and $\mathcal O_2$ given by
    $$(S_1(t),I_1(t),Y_1(t))=(s_0(t),0,0) \quad \text{and} \quad (S_2(t),I_2(t),Y_2(t))=(s_0(t),0,y_0(t)),$$
  where $s_0$ and $y_0$ are the unique positive periodic orbits of~\eqref{sist-aux-1} and~\eqref{sist-aux-2} respectively.
\end{theorem}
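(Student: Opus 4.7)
The plan is to exploit the special structure of disease-free solutions. The set $\{I=0\}$ is invariant under~\eqref{eq:ProblemaPrincipal}, and on this invariant plane the system decouples into the two independent scalar equations~\eqref{sist-aux-1} for $S$ and~\eqref{sist-aux-2} for $Y$. Hence every disease-free $\omega$-periodic orbit is necessarily of the form $(s(t),0,y(t))$ with $s$ an $\omega$-periodic solution of~\eqref{sist-aux-1} and $y$ a nonnegative $\omega$-periodic solution of~\eqref{sist-aux-2}. The theorem therefore reduces to producing the unique positive $\omega$-periodic solutions $s_0$ of~\eqref{sist-aux-1} and $y_0$ of~\eqref{sist-aux-2}; pairing $s_0$ with $y\equiv 0$ yields $\mathcal O_1$, and pairing $s_0$ with $y_0$ yields $\mathcal O_2$.

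For the linear equation~\eqref{sist-aux-1}, variation of constants gives
$$s(t)=e^{-\int_0^t\mu(u)\du}\left(s(0)+\int_0^t e^{\int_0^\tau\mu(u)\du}\Lambda(\tau)\dtau\right).$$
Imposing the periodicity condition $s(\omega)=s(0)$ and using $\overline\mu>0$ (which ensures $e^{-\int_0^\omega\mu(u)\du}<1$) determines $s(0)$ uniquely as
$$s_0(0)=\frac{\int_0^\omega e^{-\int_\tau^\omega\mu(u)\du}\Lambda(\tau)\dtau}{1-e^{-\int_0^\omega\mu(u)\du}},$$
which is strictly positive thanks to $\overline\Lambda>0$; the representation above then yields $s_0(t)>0$ on all of $[0,\omega]$ because $\Lambda\ge 0$.

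For the Bernoulli-type equation~\eqref{sist-aux-2}, the trivial solution $y\equiv 0$ is one possibility; to produce a positive periodic solution I would substitute $z=1/y$, transforming~\eqref{sist-aux-2} into the linear equation $z'=-r(t)z+b(t)$. Running the same variation-of-constants argument and using $\overline r>0$ together with $\overline b>0$ yields a unique positive $\omega$-periodic solution $z_0$, whence $y_0:=1/z_0$ is the sought positive $\omega$-periodic solution of~\eqref{sist-aux-2}.

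No serious obstacle is expected: the argument is essentially classical once the decoupling on $\{I=0\}$ is observed. The only mild care point is verifying positivity of $s_0$ and $z_0$ throughout the full period rather than merely at $t=0$, but this is automatic from the explicit integral formula together with the nonnegativity of $\Lambda$ and $b$. The substantive analytical work promised in the introduction — existence of an endemic periodic orbit under $\mathcal R>1$ via the Mawhin continuation theorem and the stability analysis of $\mathcal O_1$ and $\mathcal O_2$ via the framework of~\cite{Wang-Zhao-JDDE-2008} — lies beyond this preparatory theorem.
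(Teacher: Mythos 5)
Your proposal is correct and follows the same reduction as the paper: the disease-free plane $\{I=0\}$ is invariant, there the system decouples into the scalar equations~\eqref{sist-aux-1} and~\eqref{sist-aux-2}, and the two orbits are obtained by pairing $s_0$ with $y\equiv 0$ and with $y_0$. The only difference is that the paper obtains the existence and uniqueness of the positive periodic solutions $s_0$ and $y_0$ by citing Lemmas 1 and 3 of~\cite{Niu-Zhang-Teng-AMM-2011}, whereas you prove these facts directly — variation of constants plus the periodicity condition (using $\overline\mu>0$, $\overline\Lambda>0$) for~\eqref{sist-aux-1}, and the substitution $z=1/y$ reducing~\eqref{sist-aux-2} to a linear equation (using $\overline r>0$, $\overline b>0$) — which makes the argument self-contained but does not change the underlying route.
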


\begin{proof}
By Lemmas 1 and 3 in~\cite{Niu-Zhang-Teng-AMM-2011}, equations~\eqref{sist-aux-1} and~\eqref{sist-aux-2} have unique positive periodic solutions, respectively $s_0(t)$ and $y_0(t)$. These solutions have period $\omega$. Thus, it is easy to check that $(S_1(t),I_1(t),Y_1(t))=(s_0(t),0,0)$ and $(S_2(t),I_2(t),Y_2(t))=(s_0(t),0,y_0(t))$ are disease-free periodic solutions of system~\eqref{eq:ProblemaPrincipal} and have period $\omega$.
\end{proof}

Using the theory developed in~\cite{Wang-Zhao-JDDE-2008}, we will now determine a threshold for local stability/unstability of the disease-free orbit $\mathcal O_2$ of model~\eqref{eq:ProblemaPrincipal}.

It is easy to compute the matrices $F(t)$ and $V(t)$ in~\cite{Wang-Zhao-JDDE-2008} that in our context reduce to one dimensional matrices (that we identify with real numbers). In fact, for the orbit $\mathcal O_2$, we have $F(t)=\beta(t)s_0(t)$ and $V(t)=-c(t)-\eta(t)y_0(t)$.
The evolution operator $W(s,t,\lambda)$ associated with the linear $\omega$-periodic parametric system $w'=(-V(t)+F(t)/\lambda)w$ is easily seen to be given by
    $$W(s,t,\lambda)=\e^{-\int_s^t \beta(r)s_0(r)/\lambda -c(t)-\eta(r)y_0(r) \dr}$$
and thus
    $$W(\omega,0,\lambda)=1 \quad \Leftrightarrow \quad \overline{\beta s_0}/\lambda -\overline{c}-\overline{\eta y_0}=0 \quad \Leftrightarrow \quad
    \lambda = \frac{\overline{\beta s_0}}{\overline{c}+\overline{\eta y_0}}.$$
Define
    \begin{equation}\label{eq:mathcal-R}
        \mathcal R = \dfrac{\overline{\beta s_0}}{\overline c+\overline{\eta y_0}}.
    \end{equation}
Notice that $\mathcal R$ coincides with the threshold obtained in~\cite{Niu-Zhang-Teng-AMM-2011}. In fact, it follows from Corollaries 1 and 2 in~\cite{Niu-Zhang-Teng-AMM-2011} that when $\mathcal R>1$ the infected prey is permanent and, when $\mathcal R\le 1$ the infected prey goes to extinction.
It follows from Theorem 2.2 in~\cite{Wang-Zhao-JDDE-2008} and Corollary 2 in~\cite{Niu-Zhang-Teng-AMM-2011} that the periodic orbit $\mathcal O_2$ is locally asymptotically stable when $\mathcal R < 1$ and unstable when $\mathcal R > 1$. We can add the following result:

\begin{theorem}\label{teo:DF3}
  When $\mathcal R \le 1$, the $\omega$-limit of any solution of~\eqref{eq:ProblemaPrincipal} with initial condition $(t_0,S(t_0),I(t_0),Y(t_0))$ in the set $\{(t,S,I,Y) \in (\R_0^+)^4: Y>0\}$ is the periodic orbit $\mathcal O_2$ and the $\omega$-limit of any solution of~\eqref{eq:ProblemaPrincipal} with initial condition $(t_0,S(t_0),I(t_0),Y(t_0))$ in the set $\{(t,S,I,Y) \in (\R_0^+)^4: Y=0\}$ is the periodic orbit $\mathcal O_1$.
\end{theorem}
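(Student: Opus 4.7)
My plan is to reduce both assertions to three tools already at hand: the extinction $I(t)\to 0$ cited from Corollary~2 of~\cite{Niu-Zhang-Teng-AMM-2011}, the global attractivity of the positive periodic solutions $s_0$ and $y_0$ in~\eqref{sist-aux-1}--\eqref{sist-aux-2} (Lemmas~1 and~3 there), and standard differential-inequality sandwich arguments for the $S$-- and $Y$--components.

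I would treat the case $Y(t_0)=0$ first. Since the third equation has the form $Y'=Y\cdot(r-bY+k\eta I)$, uniqueness forces $Y(t)\equiv 0$ for all $t\ge t_0$, reducing the system to $S'=\Lambda-\mu S-\beta SI$ and $I'=I(\beta S-c)$. Combining $I\to 0$ with the inequality $S'\le \Lambda-\mu S$ will yield $\limsup_{t\to\infty}(S(t)-s_0(t))\le 0$ by direct comparison with~\eqref{sist-aux-1}. For the matching lower bound, given $\eps>0$ I would choose $T$ so large that $\beta(t)I(t)<\eps$ for $t\ge T$; then $S'\ge \Lambda-(\mu+\eps)S$, and the perturbed equation $s'=\Lambda-(\mu+\eps)s$ still has a unique positive $\omega$-periodic solution $s_{0,\eps}$ (Lemma~1 of~\cite{Niu-Zhang-Teng-AMM-2011} applies since $\overline{\mu+\eps}>0$) attracting every positive solution, with $s_{0,\eps}\to s_0$ uniformly as $\eps\to 0^+$ by continuous dependence on parameters. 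Comparison will then give $\liminf_{t\to\infty}(S-s_0)\ge 0$, whence $S\to s_0$ and the $\omega$-limit is $\mathcal O_1$.

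For an initial condition with $Y(t_0)>0$, the multiplicative form of the $Y$-equation gives $Y(t)>0$ for all $t\ge t_0$, and once again $I(t)\to 0$. The main new step is $Y\to y_0$. Using $I,k,\eta\ge 0$, one has $Y'\ge Y(r-bY)$, so comparison with the solution of~\eqref{sist-aux-2} starting from $Y(t_0)$, which tends to $y_0$ by Lemma~3 of~\cite{Niu-Zhang-Teng-AMM-2011}, will yield $\liminf_{t\to\infty}(Y-y_0)\ge 0$. For the matching upper bound, given $\eps>0$ I would pick $T$ with $k(t)\eta(t)I(t)<\eps$ for $t\ge T$; then $Y'\le Y(r+\eps-bY)$, so $Y$ is dominated from above by the solution of $y'=y(r+\eps-by)$ starting at $Y(T)$, which tends to the unique positive periodic solution $y_{0,\eps}$ of this perturbed logistic (Lemma~3 still applies since $\overline{r+\eps}>0$ and $\overline b>0$). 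Continuous dependence gives $y_{0,\eps}\to y_0$ as $\eps\to 0^+$, so $\limsup_{t\to\infty}(Y-y_0)\le 0$ and $Y\to y_0$. The convergence $S\to s_0$ then follows verbatim from the argument of the previous paragraph, which never used $Y\equiv 0$. Hence the $\omega$-limit is $\mathcal O_2$.

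The only nontrivial technical point, and the one I expect to write out most carefully, is the continuous dependence $s_{0,\eps}\to s_0$ and $y_{0,\eps}\to y_0$ as $\eps\to 0^+$. Both are routine consequences of the smooth dependence of the Poincar\'e map on parameters together with uniqueness of the positive periodic solution, and can alternatively be verified via the explicit integral representations underlying the proofs of Lemmas~1 and~3 of~\cite{Niu-Zhang-Teng-AMM-2011}.
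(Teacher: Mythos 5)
Your argument is correct and reaches the same conclusion, but it is implemented differently from the paper's proof. The paper also starts from the extinction $I(t)\to 0$ given by Corollary 2 of \cite{Niu-Zhang-Teng-AMM-2011}, but then (i) for the $S$-component it sets $u=S-s_0$, uses the bound $S\le M_1$ from Theorem 1 of \cite{Niu-Zhang-Teng-AMM-2011} to estimate $\beta SI\le \beta^u M_1\eps$, and concludes $u\to 0$ by a direct variation-of-constants estimate; and (ii) for the $Y$-component it uses the bound $Y\le M_2$ to obtain the additive perturbation $Y'\le Y(r-bY)+k\eta M_2\eps$ and then simply cites Lemma 2 of \cite{Niu-Zhang-Teng-AMM-2011} to get $|Y-y_0|\to 0$. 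You instead keep the perturbations multiplicative ($\beta I\le\eps$ gives $S'\ge\Lambda-(\mu+\eps)S$, and $k\eta I\le\eps$ gives $Y'\le Y(r+\eps-bY)$) and run two-sided comparison sandwiches against the $\eps$-perturbed versions of \eqref{sist-aux-1} and \eqref{sist-aux-2}, closing the argument with continuous dependence of $s_{0,\eps}$ and $y_{0,\eps}$ on $\eps$. This buys you independence from the a priori bounds $M_1,M_2$ of Theorem 1 in \cite{Niu-Zhang-Teng-AMM-2011} and from the perturbation lemma (Lemma 2) for the logistic, at the cost of having to verify attractivity of the perturbed periodic solutions and the limits $s_{0,\eps}\to s_0$, $y_{0,\eps}\to y_0$; as you note, these follow from the explicit integral formulas (for the logistic, via $w=1/y$), so the extra work is genuinely routine. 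Your handling of the set $\{Y=0\}$ by invariance and reduction matches the paper's.

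One caveat, which you share with the paper's own proof rather than introduce: on the invariant set $\{Y=0\}$ both arguments invoke $I(t)\to 0$ from Corollary 2 of \cite{Niu-Zhang-Teng-AMM-2011}, whose proof of extinction uses the eventual lower bound on $Y$ coming from $Y(t_0)>0$; on $\{Y=0\}$ the relevant threshold is $\overline{\beta s_0}/\overline{c}$, which can exceed $1$ while $\mathcal R\le 1$. If you want your write-up to be self-contained on this point, you should either restrict the second assertion to initial data with $I(t_0)=0$ or add the hypothesis $\overline{\beta s_0}\le\overline{c}$ for that case; since the paper's proof passes over the same point with ``it is immediate,'' this is not a defect of your proposal relative to the paper.
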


\begin{proof}
Let $\eps>0$ and $(S(t),I(t),Y(t))$ be some particular solution of~\eqref{eq:ProblemaPrincipal} with initial condition $(t_0,S(t_0),I(t_0),Y(t_0))$ and write $u(t)=S(t)-s_0(t)$ and $u_0=S(t_0)-s_0(t_0)$. Since $\mathcal R\le 1$, by Corollary 2 in~\cite{Niu-Zhang-Teng-AMM-2011}, there is $T>0$ such that $I(t)<\eps$ for all $t \ge T$.  Additionally, by Theorem 1 in~\cite{Niu-Zhang-Teng-AMM-2011}, there is $M_1>0$ such that $S(t)\le S(t)+I(t) \le M_1$. Thus
\[
u'=-\mu(t) u-\beta(t)SI \le -\mu(t) u-\beta(t) M_1 \eps,
\]
for all $t \ge T$. Therefore
\[
u(t) \le u_0 \e^{-\int_{t_0}^t \mu(s) \ds} +\beta^u M_1 \eps \int_{t_0}^t \e^{-\int_r^t \mu(s) \ds} \dr.
\]
By~C\ref{eq:cond-2}) and since $\eps>0$ is arbitrary, we conclude that $u(t)\to 0$ as $t \to +\infty$. Thus $S(t) \to s_0(t)$ as $t \to +\infty$.

Again by Theorem 1 in~\cite{Niu-Zhang-Teng-AMM-2011}, if $Y(t_0)>0$, there are $m_2,M_2>0$ such that, for $t$ sufficiently large, we have $m_2 \le Y(t)\le M_2$. Thus
\[
Y'=Y(r(t)-b(t)Y)+k(t)\eta(t)IY\le Y(r(t)-b(t)Y)+k(t)\eta(t)M_2 \eps,
\]
for all $t$ sufficiently large. Since $\eps>0$ is arbitrary, by Lemma 2 in~\cite{Niu-Zhang-Teng-AMM-2011}, we conclude that $|Y(t)-y_0(t)| \to 0$ as $t \to +\infty$.
We finally obtain that $(S(t),I(t),Y(t)) \to (s_0(t),0,y_0(t))$ as $t \to +\infty$.

Since the set $\mathcal C = \{(t,S,I,Y) \in (\R_0^+)^4: Y=0\}$ is invariant, it is immediate that the omega limit of any solution with initial condition in $\mathcal C$, $(S(t),I(t),0)$, verifies $(S(t),I(t),0) \to (s_0(t),0,0)$ as $t \to +\infty$.
\end{proof}

\begin{theorem}\label{teo:DF3}
  When $\mathcal R > 1$, the $\omega$-limit of any solution of~\eqref{eq:ProblemaPrincipal} with initial condition $(t_0,S(t_0),I(t_0),Y(t_0))$ in the set $\{(t,S,I,Y) \in (\R_0^+)^4: I=0 \wedge Y>0\}$ is the periodic orbit $\mathcal O_2$ and the $\omega$-limit of any solution of~\eqref{eq:ProblemaPrincipal} with initial condition $(t_0,S(t_0),I(t_0),Y(t_0))$ in the set $\{(t,S,I,Y) \in (\R_0^+)^4: I=Y=0\}$ is the periodic orbit $\mathcal O_1$.
\end{theorem}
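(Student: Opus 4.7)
The plan is to exploit the invariance of the set $\{I=0\}$ under the flow of \eqref{eq:ProblemaPrincipal}, which reduces the question to the asymptotic behaviour of the two scalar auxiliary equations \eqref{sist-aux-1} and \eqref{sist-aux-2}. First I would rewrite the $I$-equation as $I'=I(\beta(t)S-c(t)-\eta(t)Y)$, so that $I(t_0)=0$ forces $I(t)\equiv 0$ for all $t\ge t_0$ by uniqueness. Substituting $I\equiv 0$ into the remaining two equations, the $S$-component satisfies $S'=\Lambda(t)-\mu(t)S$, which is precisely \eqref{sist-aux-1}, and the $Y$-component satisfies $Y'=Y(r(t)-b(t)Y)$, which is precisely \eqref{sist-aux-2}; so the three-dimensional system decouples on the invariant set.

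With this reduction in hand, the conclusion follows directly from the asymptotic statements already cited in the proof of Theorem~\ref{teo:Df1}: Lemma 1 of \cite{Niu-Zhang-Teng-AMM-2011} guarantees $S(t)-s_0(t)\to 0$ as $t\to +\infty$ for every nonnegative solution of \eqref{sist-aux-1}, and Lemma 3 of \cite{Niu-Zhang-Teng-AMM-2011} guarantees $Y(t)-y_0(t)\to 0$ for every positive solution of \eqref{sist-aux-2}. Thus for initial data in $\{I=0,\,Y>0\}$ one obtains $(S(t),I(t),Y(t))\to(s_0(t),0,y_0(t))$, i.e.\ the $\omega$-limit is $\mathcal O_2$. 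For initial data in $\{I=Y=0\}$ the $Y$-equation also factors through $Y$, so the axis $Y=0$ is invariant, $Y(t)\equiv 0$, and the same argument gives $(S(t),I(t),Y(t))\to(s_0(t),0,0)$, i.e.\ the $\omega$-limit is $\mathcal O_1$.

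I do not expect a serious obstacle here; the content of the theorem is essentially that, once $I$ is set to zero, the dynamics are already understood. The hypothesis $\mathcal R>1$ does not actually enter the argument: it is present only to distinguish the statement from the previous theorem, where the restriction $I(t_0)=0$ was replaced by extinction of the infectives coming from $\mathcal R\le 1$. The only point that deserves a line of care is noting that the cited Lemmas from \cite{Niu-Zhang-Teng-AMM-2011} provide global attraction of $s_0$ and $y_0$ on $\R_0^+$ and on $(0,+\infty)$ respectively, which is what is needed since the initial values $S(t_0),Y(t_0)$ are only assumed to lie in the stated sets.
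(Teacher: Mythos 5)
Your proposal is correct and follows essentially the same route as the paper: restrict to the invariant set $\{I=0\}$, observe that the system decouples into the auxiliary equations \eqref{sist-aux-1} and \eqref{sist-aux-2}, and invoke the attractivity results of \cite{Niu-Zhang-Teng-AMM-2011} for $s_0$ and $y_0$ (the paper only adds the remark, via permanence under $\mathcal R>1$, that the basins of $\mathcal O_1$ and $\mathcal O_2$ cannot extend beyond $\{I=0\}$, which is extra information not needed for the stated conclusion).
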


\begin{proof}
According to Corollary 1 in~\cite{Niu-Zhang-Teng-AMM-2011}, when $\mathcal R>1$ the disease is permanent and thus the basin of attraction of the orbits $\mathcal O_1$ and $\mathcal O_2$ must be contained on the invariant set $\{(t,S,I,Y) \in (\R_0^+)^4: I=0\}$. On the other hand, if $(S(t),0,Y(t))$ is some solution of~\eqref{eq:ProblemaPrincipal}, it must satisfy $S'=\Lambda(t)-\mu(t)S$ and $Y'=Y(r(t)-b(t)Y)$. According to Lemmas 1 and 2 in~\cite{Niu-Zhang-Teng-AMM-2011}, the $\omega$-limit of nonnegative solutions of $S'=\Lambda(t)-\mu(t)S$ is the periodic orbit $\{s_0(t)\}$
and the $\omega$-limit of positive solutions of $Y'=Y(r(t)-b(t)Y)$ is the periodic orbit $\{y_0(t)\}$. Thus, the $\omega$-limit of orbits with initial conditions $(t_0,S(t_0),I(t_0),Y(t_0))$ in the set $\{(t,S,I,Y) \in (\R_0^+)^4: I=0 \wedge Y>0\}$ is the orbit $\mathcal O_2$. Moreover, it is immediate that the set $\{(t,S,I,Y) \in (\R_0^+)^4: I=Y=0\}$ is also invariant and that the $\omega$-limit of orbits in this set is the orbit $\mathcal O_1$. The result follows.
\end{proof}

\section{Existence of endemic periodic orbits}
When $\mathcal R>1$, it was proved in~\cite{Niu-Zhang-Teng-AMM-2011} (see Corollary 1) that we have permanence of the infected prey. In this section, we will use a well known result in degree theory, the Mawhin continuation theorem~\cite{Caines-Mawhin-NDE-1977}, and the permanence of the infected prey to establish the existence of at least one endemic periodic orbit for system~\eqref{eq:ProblemaPrincipal}.

\begin{theorem}\label{teo:main}
  When $\mathcal R>1$ system~\eqref{eq:ProblemaPrincipal} has an endemic $\omega$-periodic orbit.
\end{theorem}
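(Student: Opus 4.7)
The plan is to apply the Mawhin continuation theorem~\cite{Caines-Mawhin-NDE-1977} after a logarithmic change of variables that automatically enforces positivity. Setting $S=\e^{u}$, $I=\e^{v}$, $Y=\e^{w}$, system~\eqref{eq:ProblemaPrincipal} becomes
\begin{align*}
u' &= \Lambda(t)\e^{-u}-\beta(t)\e^{v}-\mu(t),\\
v' &= \beta(t)\e^{u}-c(t)-\eta(t)\e^{w},\\
w' &= r(t)-b(t)\e^{w}+k(t)\eta(t)\e^{v},
\end{align*}
and any $\omega$-periodic solution $(u,v,w)$ produces an endemic $\omega$-periodic orbit of~\eqref{eq:ProblemaPrincipal}. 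I would work in $X=Y=\{z\in C(\R,\R^{3}):z(t+\omega)=z(t)\}$ with the sup norm, define $Lz=z'$ on the $C^{1}$-subspace, and let $Nz$ be the right-hand side above. A standard verification gives that $L$ is Fredholm of index zero with $\ker L=\R^{3}$ (the constant maps) and $\operatorname{Im}L=\{f\in Y:\overline{f}=0\}$, that the projector onto $Y/\operatorname{Im}L$ is the mean $Qf=\overline{f}$, and that $N$ is $L$-compact on bounded sets.

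The heart of the argument is to produce an open bounded $\Omega\subset X$ for which Mawhin's hypotheses hold. Integrating each of the three equations of $Lz=\lambda Nz$ over one period (for any $\lambda\in(0,1]$) yields the averaging identities
\begin{gather*}
\overline{\Lambda\e^{-u}}=\overline{\beta\e^{v}}+\overline{\mu},\qquad
\overline{\beta\e^{u}}=\overline{c}+\overline{\eta\e^{w}},\qquad
\overline{b\e^{w}}=\overline{r}+\overline{k\eta\e^{v}},
\end{gather*}
which are independent of $\lambda$. Combining these with pointwise analysis at the extrema of $u,v,w$ and with the total-variation estimate $\max u-\min u\le\int_{0}^{\omega}|u'|\dt$ (and its counterparts for $v,w$), I would derive uniform upper bounds on $\|u\|_{\infty},\|v\|_{\infty},\|w\|_{\infty}$ and uniform lower bounds on $u$ and $w$.

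I expect the main obstacle to be the uniform positive lower bound on $v$, i.e., keeping $\e^{v}=I$ away from zero along the whole homotopy, since the $v$-equation produces only the constraint $\overline{\beta\e^{u}-c-\eta\e^{w}}=0$ and does not see $v$ explicitly. This is precisely where $\mathcal{R}>1$ and the permanence of the infectives from~\cite{Niu-Zhang-Teng-AMM-2011} become indispensable. My plan is to argue by contradiction: if $(u_{n},v_{n},w_{n})$ were a sequence of $\omega$-periodic solutions of $Lz=\lambda_{n}Nz$ with $\min v_{n}\to-\infty$, then the terms $\beta\e^{v_{n}}$ in the $u$-equation and $k\eta\e^{v_{n}}$ in the $w$-equation become uniformly small, so an Ascoli-type argument should force $(u_{n},w_{n})\to(\log s_{0},\log y_{0})$; passing to the limit in the second averaging identity would then give $0=\overline{\beta\e^{u_{n}}-c-\eta\e^{w_{n}}}\to\overline{\beta s_{0}}-\overline{c}-\overline{\eta y_{0}}>0$, contradicting $\mathcal{R}>1$. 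Making this limit procedure quantitative and uniform in $\lambda\in(0,1]$ is the delicate step, and this is exactly the level of control supplied by the permanence of the infected prey.

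Once $\Omega$ is fixed so that every periodic solution of the family lies strictly inside it, the final step is the degree computation. On $\ker L=\R^{3}$, the equation $QNz=0$ reduces to the algebraic system obtained by replacing the time-dependent averages above with $\overline{\Lambda},\overline{\beta},\overline{c},\overline{\eta},\overline{b},\overline{r},\overline{k\eta}$. Eliminating $\e^{u}$ and $\e^{w}$ in favour of $\e^{v}$ produces a quadratic in $\e^{v}$ with positive leading coefficient, and one checks that it admits a unique positive root inside $\Omega\cap\ker L$ at which the Jacobian of $QN$ has a definite sign coming from the monotone structure of the reduction, so the Brouwer degree equals $\pm 1$. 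Mawhin's continuation theorem then yields a solution of $Lz=Nz$ in $\overline{\Omega}$, which is the desired endemic $\omega$-periodic orbit of~\eqref{eq:ProblemaPrincipal}.
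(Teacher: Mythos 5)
Your overall skeleton is the same as the paper's: the logarithmic change of variables, the spaces $X=Z$ of continuous $\omega$-periodic functions, $\mathcal L u=u'$ with $\ker \mathcal L=\R^3$ and $Q$ the mean-value projector, $L$-compactness of $\mathcal N$, and the final degree computation via the reduced algebraic system, whose elimination to a quadratic in $\e^{v}$ with a unique positive root and a Jacobian of fixed sign (the paper computes it explicitly as $-(\overline\Lambda\,\overline k\,\overline\eta^2+\overline b\,\overline\beta^2)\e^{p_1^*+p_2^*+p_3^*}<0$, giving degree $-1$) is exactly what the paper does.

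The genuine gap is in the step you yourself flag as delicate: the uniform lower bound on $v=\ln I$ along the homotopy. Your contradiction argument assumes that if $\min v_n\to-\infty$ for solutions of $\mathcal L u=\lambda_n\mathcal N u$, then $(u_n,w_n)\to(\ln s_0,\ln y_0)$. But for $\lambda\ne 1$ the relevant limiting subsystems are $S'=\lambda(\Lambda(t)-\mu(t)S)$ and $Y'=\lambda Y(r(t)-b(t)Y)$, whose positive periodic solutions depend on $\lambda$ and are not $s_0,y_0$; and if $\lambda_n\to0$ the limits are the constants $\bar\Lambda/\bar\mu$ and $\bar r/\bar b$. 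Consequently, passing to the limit in the averaged identity yields $\overline{\beta s_{0,\lambda}}-\overline c-\overline{\eta y_{0,\lambda}}=0$ (or $\bar\beta\bar\Lambda/\bar\mu-\bar c-\bar\eta\bar r/\bar b=0$), and $\mathcal R>1$, which is formulated through $s_0$ and $y_0$, does not immediately contradict this: you would need a $\lambda$-uniform threshold condition ($\mathcal R_\lambda>1$ for all $\lambda$ in the homotopy), which you have not established. Invoking ``permanence'' does not by itself repair this, since the permanence theorem of Niu--Zhang--Teng is stated for the original system, not for the $\lambda$-deformed family; the paper's proof sidesteps your limiting argument entirely by using that permanence result to produce directly a uniform constant $m>0$ with $\e^{u_2}\ge m$ for positive periodic solutions (together with explicit pointwise bounds $A_1\le \e^{u_1}+\e^{u_2}\le A_2$, $B_1\le\e^{u_3}\le B_2$ from Remark 1 of that reference, and a differential inequality for $S$ giving the lower bound on $u_1$). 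To complete your version you must either prove the $\lambda$-uniform analogue of permanence or transfer the known bounds to the family as the paper does; as written, the contradiction step does not close.
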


As announced, to prove theorem~\ref{teo:main} we will use Mawhin's continuation theorem. Before stating this theorem, we need to give some definitions and recall some well known facts. Let $X$ and $Z$ be Banach spaces.
\begin{definition}
A linear mapping $\mathcal L: D \subseteq X \to Z$ is called a \emph{Fredholm mapping of index zero} if
\begin{enumerate}
\item $\dim \ker \mathcal L = \codim \imagem \mathcal L < \infty$;
\item $\imagem \mathcal L$ is closed in $Z$.
\end{enumerate}
\end{definition}
Given a Fredholm mapping of index zero, $\mathcal L: D \subseteq X \to Z$ , it is well known that there are continuous projectors $P:X\to X$ and $Q:Z\to Z$ such that
\begin{enumerate}
  \item $\imagem P = \ker \mathcal L$;
  \item $\ker Q = \imagem \mathcal L = \imagem (I-Q)$;
  \item $X = \ker \mathcal L \oplus \ker P$;
  \item $Z = \imagem \mathcal L \oplus \imagem Q$.
\end{enumerate}
It follows that $\mathcal L|_{D \cup \ker P}: (I-P)X \to \imagem \mathcal L$ is invertible. We denote the inverse of that map by $K_p$.
\begin{definition}
A continuous mapping $\mathcal N: X \to Z$ is called $L$-compact on $\overline{U} \subset X$, where $U$ is an open bounded set, if
\begin{enumerate}
\item $Q\mathcal N(\overline U)$ is bounded;
\item $K_p(I-Q)\mathcal N: \overline{U} \to X$ is compact.
\end{enumerate}
\end{definition}
Since $\imagem Q$ is isomorphic to $\ker \mathcal L$, there exists an isomorphism $\mathcal J: \imagem Q \to \ker \mathcal L$.

We are now prepared to state the theorem that will allow us to prove theorem~\ref{teo:main}.

\begin{theorem}(Mawhin's continuation theorem~\cite{Caines-Mawhin-NDE-1977}) \label{teo:Mawhin}
  Let $X$ and $Z$ be Banach spaces, let $U \subset X$ be an open set, let $\mathcal L: D \subseteq X \to Z$ be a  Fredholm mapping of index zero and let $\mathcal N: X \to Z$ be $L$-compact on $\overline{U}$. Assume that
\begin{enumerate}
  \item for each $\lambda \in (0,1)$ and $x \in \partial U \cap D$ we have $\mathcal L x \ne \lambda \mathcal N x$;
  \item for each  $x \in \partial U \cap \ker \mathcal L$ we have $Q\mathcal N x \ne 0$;
  \item $\deg(\mathcal J Q \mathcal N, U \cap \ker \mathcal L, 0) \ne 0$.
\end{enumerate}
Then the operator equation $\mathcal L x= \mathcal N x$ has at least one solution in $D \cap \overline U$.
\end{theorem}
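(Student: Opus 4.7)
The plan is to convert the coincidence equation $\mathcal{L}x=\mathcal{N}x$ into a fixed-point equation $x=\Phi_1(x)$ for a compact map on $\overline{U}$, and then deform $\Phi_1$ through a homotopy $\Phi_\lambda$ to a finite-dimensional map whose Leray--Schauder degree reduces to the Brouwer degree appearing in hypothesis~(3).

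First I would introduce the homotopy
\[
\Phi_\lambda(x)=Px+\mathcal{J}Q\mathcal{N}x+\lambda K_p(I-Q)\mathcal{N}x,\qquad \lambda\in[0,1],\ x\in\overline{U}.
\]
Since $P$ has finite rank (equal to $\dim\ker\mathcal{L}$), $\mathcal{J}Q\mathcal{N}$ factors through the finite-dimensional space $\imagem Q\cong\ker\mathcal{L}$, and $K_p(I-Q)\mathcal{N}$ is compact by the $L$-compactness of $\mathcal{N}$ on $\overline U$, the family $\Phi_\lambda$ consists of compact maps, continuous jointly in $(\lambda,x)$. Using the identities $P\mathcal{J}=\mathcal{J}$, $PK_p=0$, $Q\mathcal{L}=0$, and $\mathcal{L}K_p=\Id$ on $\imagem\mathcal{L}$, one checks by projecting the equation $x=\Phi_\lambda(x)$ onto $\ker\mathcal{L}$ and $\ker P$ separately that it is equivalent to the pair of conditions $\mathcal{L}x=\lambda\mathcal{N}x$ together with $Q\mathcal{N}x=0$; in particular any such $x$ automatically lies in $D$.

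Next I would verify that $\Phi_\lambda$ has no fixed point on $\partial U$ for any $\lambda\in[0,1]$. One may assume at the outset that $\mathcal{L}x=\mathcal{N}x$ has no solution on $\partial U\cap D$, for otherwise the conclusion already holds; combined with hypothesis~(1) this rules out fixed points of $\Phi_\lambda$ for $\lambda\in(0,1]$. For $\lambda=0$ any fixed point lies in $\ker\mathcal{L}$ and satisfies $Q\mathcal{N}x=0$, which is excluded on $\partial U$ by hypothesis~(2). Consequently the Leray--Schauder degree $\deg_{LS}(I-\Phi_\lambda,U,0)$ is well defined and, by homotopy invariance, independent of $\lambda$.

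Finally I would compute the degree at $\lambda=0$. Since $\Phi_0$ takes values in the finite-dimensional subspace $\ker\mathcal{L}$, the reduction property of Leray--Schauder degree gives
\[
\deg_{LS}(I-\Phi_0,U,0)=\deg_B\bigl(-\mathcal{J}Q\mathcal{N}\big|_{\ker\mathcal{L}},\,U\cap\ker\mathcal{L},\,0\bigr)=\pm\deg(\mathcal{J}Q\mathcal{N},U\cap\ker\mathcal{L},0),
\]
which is nonzero by hypothesis~(3). Hence $\deg_{LS}(I-\Phi_1,U,0)\neq 0$, so $\Phi_1$ has a fixed point in $U$, and by the equivalence established above this fixed point solves $\mathcal{L}x=\mathcal{N}x$ in $D\cap\overline U$. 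The main subtlety I anticipate is the algebraic verification that $x=\Phi_\lambda(x)$ is equivalent to the coincidence equation $\mathcal{L}x=\lambda\mathcal{N}x$ coupled with the constraint $Q\mathcal{N}x=0$, together with the correct invocation of the reduction formula relating the Leray--Schauder degree on $X$ to the Brouwer degree on $\ker\mathcal{L}$.
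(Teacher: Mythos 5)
The paper offers no proof of this statement---it is quoted as a classical result from Gaines and Mawhin's monograph \cite{Caines-Mawhin-NDE-1977}---so there is no in-paper argument to compare against. Your proposal is the standard proof from that reference (the homotopy $\Phi_\lambda = P + \mathcal J Q\mathcal N + \lambda K_p(I-Q)\mathcal N$, whose fixed points are exactly the solutions of $\mathcal L x=\lambda\mathcal N x$ with $Q\mathcal N x=0$, admissibility on $\partial U$ via hypotheses (1)--(2), and reduction of the Leray--Schauder degree of $I-\Phi_0$ to the Brouwer degree in hypothesis (3)), and it is correct; the only point worth making explicit is that the boundedness of $U$, implicit in the definition of $L$-compactness, is what makes the finite-rank part $P+\mathcal J Q\mathcal N$ compact on $\overline U$.
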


We will now prove theorem~\ref{teo:main}.

\begin{proof}[Proof of theorem~\ref{teo:main}]
To apply Mawhin's theorem to our system we need to make a change of variables. Namely, with the change of variables $S(t)=\e^{u_1(t)}$, $I(t)=\e^{u_2(t)}$ and $Y(t)=\e^{u_3(t)}$, system~\eqref{eq:ProblemaPrincipal} becomes
\begin{equation}\label{eq:sistema-aplic-Mawhin}
\begin{cases}
u_1'=\Lambda(t)\e^{-u_1}-\beta(t)\e^{u_2}-\mu(t)\\
u_2'=\beta(t)\e^{u_1}-c(t)-\eta(t)\e^{u_3}\\
u_3'=r(t)-b(t)\e^{u_3}+k(t)\eta(t)\e^{u_2}
\end{cases}
\end{equation}
Note that, if $(u_1^*(t),u_2^*(t),u_3^*(t))$ is a periodic solution of period $\omega$ of system~\eqref{eq:sistema-aplic-Mawhin} then
$\left(\e^{u_1^*(t)},\e^{u_2^*(t)},\e^{u_3^*(t)}\right)$ is a periodic solution of period $\omega$ of system~\eqref{eq:ProblemaPrincipal}.

We will now prepare the setting where we will apply Mawhin's theorem. Our Banach spaces $X$ and $Z$ will be the space
$$X=Z=\{u = (u_1,u_2,u_3) \in C(\R,\R^3): u(t)=u(t+\omega) \}$$
with the norm
$$\|u\|=\max_{t \in [0,\omega]} |u_1(t)|+\max_{t \in [0,\omega]} |u_2(t)|+\max_{t \in [0,\omega]} |u_3(t)|.$$
Letting $D=X \cap C^1(\R,\R^3)$, we consider the linear map $\mathcal L: D \subseteq X \to Z$ given by
\[
\mathcal L u(t) = \dfrac{d u(t)}{dt}
\]
and the map $\mathcal N: X \to Z$ defined by
\[
\mathcal N u(t) =
\left[
\begin{array}{c}
\Lambda(t)\e^{-u_1}-\beta(t)\e^{u_2}-\mu(t)\\[2mm]
\beta(t)\e^{u_1}-c(t)-\eta(t))\e^{u_3}\\[2mm]
r(t)-b(t)\e^{u_3}+k(t)\eta(t)\e^{u_2}
\end{array}
\right].
\]
Consider also the projectors $P:X\to X$ and $Q:Z\to Z$ given by
$$ Pu = \dfrac{1}{\omega} \int_0^\omega u(t) \dt \quad \quad \text{and} \quad \quad
Qz = \dfrac{1}{\omega} \int_0^\omega z(t) \dt.$$
Note that $\imagem P = \ker \mathcal L = \R^3$, that
$$\ker Q = \imagem \mathcal L = \imagem (I-Q)
= \left\{z \in Z: \dfrac{1}{\omega} \int_0^\omega z(t) \dt =0\right\},$$
that $\mathcal L$ is a Fredholm mapping of index zero (since
$\dim \ker \mathcal L =\codim \imagem \mathcal L =3$) and that $\imagem \mathcal L$
is closed in $X$.

Consider the generalized inverse of $\mathcal L$,
$\mathcal K_p: \imagem \mathcal L \to D \cap \ker P$, given by
$$\mathcal K_p z(t)=\int_0^t z(s)\ds - \dfrac{1}{\omega} \int_0^\omega \int_0^r z(s) \ds \dr,$$
the operator $Q \mathcal N:X \to Z$ given by
\[
Q \mathcal N u(t) =
\left[
\begin{array}{c}
\dfrac{1}{\omega} \dint_0^\omega \Lambda(t)\e^{-u_1(t)}-\beta(t)\e^{u_2(t)}\dt -\bar\mu\\[3mm]
\dfrac{1}{\omega} \dint_0^\omega  \beta(t)\e^{u_1(t)}-\eta(t)\e^{u_3(t)} \dt-\bar c\\[3mm]
\dfrac{1}{\omega} \dint_0^\omega k(t)\eta(t)\e^{u_2(t)}-b(t)\e^{u_3(t)} \dt+\bar r
\end{array}
\right].
\]

and the mapping $\mathcal K_p (I-Q)\mathcal N:X \to D \cap \ker P$ given by
    $$\mathcal K_p (I-Q) \mathcal N u(t) = A_1(t)-A_2(t)-A_3(t)$$
where
\[
A_1(t)=
\left[
\begin{array}{c}
\dint_0^t \Lambda(t)\e^{-u_1(t)}-\beta(t)\e^{u_2(t)} - \mu(t) \dt \\[3mm]
\dint_0^t \beta(t)\e^{u_1(t)}-\eta(t)\e^{u_3(t)} - c(t) \dt \\[3mm]
\dint_0^t k(t)\eta(t)\e^{u_2(t)}-b(t)\e^{u_3(t)}+r(t) \dt
\end{array}
\right],
\]
\[
A_2(t)=
\left[
\begin{array}{c}
\dfrac{1}{\omega} \dint_0^\omega \dint_0^t \Lambda(s)\e^{-u_1(s)}-\beta(s)\e^{u_2(s)} - \mu(s) \ds \dt \\[3mm]
\dfrac{1}{\omega}\dint_0^\omega\dint_0^t\beta(s)\e^{u_1(s)}-\eta(s)\e^{u_3(s)}-c(s) \ds\dt\\[3mm]
\dfrac{1}{\omega} \dint_0^\omega \dint_0^t k(s)\eta(s)\e^{u_2(s)}-b(s)\e^{u_3(s)}+r(s) \ds \dt
\end{array}
\right]
\]
and
\[
A_3(t)=\left(\frac{t}{\omega}-\frac12\right)
\left[
\begin{array}{c}
\dint_0^\omega \Lambda(t)\e^{-u_1(t)}-\beta(t)\e^{u_2(t)} - \mu(t) \dt \\[3mm]
\dint_0^\omega \beta(t)\e^{u_1(t)}-\eta(t)\e^{u_3(t)} - c(t) \dt\\[3mm]
\dint_0^\omega k(t)\eta(t)\e^{u_2(t)}-b(t)\e^{u_3(t)}+r(t) \dt
\end{array}
\right].
\]

Let $\Omega\subset X$ be bounded. For any $u\in \Omega$, we have that $\|u\| \leq M$ and $|u_i(t)|\leq M$, where $i=1,2,3$. Next, we see that $Q\mathcal N (\overline\Omega)$ is bounded.

$$\left|\dfrac{1}{\omega} \dint_0^\omega \Lambda(t)\e^{-u_1(t)}-\beta(t)\e^{u_2(t)}\dt -\bar\mu\right| \leq \left|\bar\Lambda \e^{M} - \bar\beta \e^{-M} -\bar\mu\right| \leq \bar\Lambda e^{M} + \bar\beta e^{-M} +\bar\mu $$

$$\left|\dfrac{1}{\omega} \dint_0^\omega  \beta(t)\e^{u_1(t)}-\eta(t)\e^{u_3(t)} \dt-\bar c\right|\leq\bar\beta \e^{M} + \bar\eta\e^{-M}+\bar c$$

$$\left|\dfrac{1}{\omega} \dint_0^\omega k(t)\eta(t)\e^{u_2(t)}-b(t)\e^{u_3(t)} \dt+\bar r\right|\leq \overline{ k\eta}\e^{M} + \bar b\e^{-M} + \bar r$$

It is immediate that $Q\mathcal N$ and $\mathcal K_p (I-Q) \mathcal N$ are continuous.

Consider a sequence of function $\{u\}\subset \Omega$. We have the following inequality for the first function of $\mathcal K_p (I-Q) \mathcal N$.

$$
\begin{array}{rl}
[\mathcal K_p (I-Q) \mathcal N(u)]_{(1)}(t-v)=& \dint_v^t \Lambda(s)\e^{-u_1(s)}-\beta(s)\e^{u_2(s)} - \mu(s) \ds\\[3mm]
 &   - \left(\frac{t-v}{\omega}\right)\dint_0^\omega \Lambda(s)\e^{-u_1(s)}-\beta(s)\e^{u_2(s)} - \mu(s) \ds\\[3mm]
\leq & (t-v)(\Lambda^{u}\e^{M}-\beta^{l}\e^{-M}-\mu^{l})\\[3mm]
 & -(t-v)(\bar\Lambda\e^{-M}-\bar\beta\e^{M}-\bar\mu)
\end{array}
$$

For the other two functions, we have similar inequalities. Hence the sequence $\{\mathcal K_p (I-Q) \mathcal N(u)\}$ is equicontinuous. Using the periodicity of the functions, we know that the sequence $\{\mathcal K_p (I-Q) \mathcal N(u)\}$ is uniformly bounded.

An application of Ascoli-Arzela's theorem shows that $\mathcal K_p (I-Q) \mathcal N (\overline\Omega)$
is compact for any bounded set $\Omega \subset X$. Since $Q\mathcal N (\overline\Omega)$ is bounded, we conclude that $\mathcal N$
is $L$-compact on $\Omega$ for any bounded set $\Omega \subset X$.

Consider the system, for $\lambda\in (0,1)$
\begin{equation}\label{eq:sistema-aplic-Mawhin-lambda}
\begin{cases}
u_1'=\lambda\left(\Lambda(t)\e^{-u_1}-\beta(t)\e^{u_2}-\mu(t)\right) \\
u_2'=\lambda\left(\beta(t)\e^{u_1}-c(t)-\eta(t)\e^{u_3}\right) \\
u_3'=\lambda\left(r(t)-b(t)\e^{u_3}+k(t)\eta(t)\e^{u_2} \right)
\end{cases}
\end{equation}

Fix some $\lambda \in (0,1)$. Let $(u_1,u_2,u_3)$ be some solution of~\eqref{eq:sistema-aplic-Mawhin-lambda} and, for $i=1,2,3$, define
    $$u_i(\xi_i) = \min_{t \in [0,\omega]} u_i(t) \quad \quad \text{and} \quad \quad u_i(\chi_i) = \max_{t \in [0,\omega]} u_i(t)$$

It is clear that the derivatives of the function in its maximum and minimum have to be zero, i.e. $u_i'(\xi_i)=u_i'(\chi_i)=0$, for $i=1,2,3$.

By remark 1 in~\cite{Niu-Zhang-Teng-AMM-2011} and periodicity of $u$, for $t\geq 0$, we have
\begin{equation}\label{eq:maj_min_e^u1+e^u2}
A_1 \leq \min_{t \in [0,\omega]}\e^{u_2(t)}+\e^{u_1(t)}\\ \leq \e^{u_2(t)}+\e^{u_1(t)} \leq  \max_{t \in [0,\omega]}\e^{u_2(t)}+\e^{u_1(t)} \leq A_2,
\end{equation}

\begin{equation}\label{eq:maj_min_e^u3}
B_1  \le  \e^{u_3(\xi_3)}\leq \e^{u_3(t)} \le  \e^{u_3(\chi_3)}  \leq B_2,
\end{equation}
where $B_1=(r/b)^\ell$, $A_2=(\Lambda/\mu)^u$, $B_2=((r+k\eta A_2)/b)^u$ and $A_1=(\Lambda/[c+\eta B_2])^\ell$.

To bound $u_3(t)$ we simply note that $B_1\geq r^{^\ell}/b^u > 0$ and, by~\eqref{eq:maj_min_e^u3} we have
\begin{equation}\label{eq:maj-and-min_u3}
\theta_3^-:=\ln(B_1)\leq u_3(t)\leq \ln(B_2)=:\theta_3^+.
\end{equation}

By Theorem 2 in~\cite{Niu-Zhang-Teng-AMM-2011}, in our conditions we have permanence of the infected prey. Thus, by periodicity of $u$, we conclude that there is $m > 0$ such that $\e^{u_2(t)}\ge m$ for $t \ge 0$ (with $m$ independent of the chosen positive solution). Therefore, given $t\ge 0$ and any positive solution of~\eqref{eq:sistema-aplic-Mawhin-lambda} we have, for $t \ge 0$,
\begin{equation}\label{eq:maj-and-min_u2}
\theta_2^- := \ln m \leq u_2(t)\leq \ln A_2 =: \theta_2^+.
\end{equation}

By permanence of the infectives and the first equation in~\eqref{eq:ProblemaPrincipal}, we have
\[
S'=\Lambda(t)-\beta(t)SI-\mu(t)S\ge \Lambda(t) -(\beta(t) m +\mu(t))S \ge \Lambda^\ell -(\beta^u m +\mu^u)S
\]
and we conclude that
$$S(t) \ge \dfrac{\Lambda^\ell}{\beta^u m+\mu^u}+\left(S(0)+\dfrac{\Lambda^\ell}{\beta^u m+\mu^u}\right)\e^{-(\beta^u m +\mu^u)t}.$$
Thus, by periodicity, we must have
\[
\e^{u_1(t)} \ge \dfrac{\Lambda^\ell}{\beta^u m+\mu^u}
\]
and therefore
\begin{equation}\label{eq:maj-and-min_u1}
\theta_1^- := \ln ( \Lambda^\ell/(\beta^u m+\mu^u)) \leq u_1(t)\leq \ln A_2 =: \theta_1^+.
\end{equation}

Consider the algebraic equation
\begin{equation}
\begin{cases}
\bar\Lambda-\bar\beta\e^{p_1+p_2}-\bar\mu\e^{p_1} = 0\\
\bar\beta\e^{p_1}-\bar c-\bar\eta\e^{p_3} = 0\\
\bar r-\bar b\e^{p_3}+\overline{k\eta}\e^{p_2} = 0
\end{cases}.
\end{equation}
We claim that this equation has a unique solution $(p_1^*,p_2^*,p_3^*)$ in $(\R^+)^3$. In fact, by the second and third equations we get
\begin{equation}\label{eq:quadratic}
\dfrac{\overline{k\eta}\bar{\eta}}{\bar b}\e^{2p_2}+\left(\bar c+\dfrac{\bar\mu\bar\eta\overline{k\eta}}{\bar\beta\bar{b}}+\dfrac{\bar\eta \bar r}{\bar b}\right)\e^{p_2}+\dfrac{\bar\mu}{\bar \beta}\left(\bar c  + \dfrac{\bar \eta \bar r}{\bar b}\right)-\bar \Lambda=0.
\end{equation}
Since, by hypothesis, we have
\[
\dfrac{\bar\beta \bar\Lambda/\bar\mu}{\bar{c}+\bar\eta\bar{r}/\bar{b}}>1
\quad \Leftrightarrow \quad \bar \Lambda > \dfrac{\bar\mu}{\bar \beta}\left(\bar c  + \dfrac{\bar \eta \bar r}{\bar b}\right)
\]
we conclude that
\[
\sqrt{\left(\bar c+\dfrac{\bar\mu\bar\eta\overline{k\eta}}{\bar\beta\bar{b}}+
\dfrac{\bar\eta \bar r}{\bar b}\right)^2-4\dfrac{\overline{k\eta}\bar{\eta}}{\bar b}\left(\dfrac{\bar\mu}{\bar \beta}\left(\bar c  + \dfrac{\bar \eta \bar r}{\bar b}\right)-\bar \Lambda\right)} > \bar c+\dfrac{\bar\mu\bar\eta\overline{k\eta}}{\bar\beta\bar{b}}+
\dfrac{\bar\eta \bar r}{\bar b}
\]
and there is a unique positive solution of~\eqref{eq:quadratic}, $p_2^*$. Known $p_2^*$, we conclude that $p_3^*=\ln [(\bar r+\bar k\bar \eta \e^{p_2^*})/\bar b]$ and $p_1^*=\ln[(\bar c+\bar \eta \e^{p_3^*})/\bar \beta]$. The claim follows.

Let $M_0>0$ be such that $|p_1^*|+|p_2^*|+|p_3^*|<M_0$ and $M_i=\max\left\{|\theta^+_i|, |\theta^-_i|\right\}$, for $i=1,2,3$. And we define
\[
M=M_0+M_1+M_2+M_3
\]
We are now in conditions of establishing the region where we will apply Mahwin's theorem. We will consider
\[
\Omega=\{(u_1,u_2,u_3)\in X: \|(u_1,u_2,u_3)\|<M\}.
\]

The first two conditions of Theorem~\ref{teo:Mawhin} are satisfied by $\Omega$.

By condition~C\ref{eq:cond-2}) we have
$$
\begin{array}{rl}
 det( D ( Q \mathcal N)(p_1^*,p_2^*,p_3^*))= &\left|
\begin{array}{ccc}
-\bar\Lambda\e^{-p_1^*} & -\bar\beta\e^{p_2^*} & 0 \\[3mm]
 \bar\beta\e^{p_1^*}& 0 & -\bar\eta\e^{p_3^*} \\[3mm]
0 & \overline{k\eta}\e^{p_2^*} & -\bar b\e^{p_3^*}
\end{array}\right|\\[1cm]

=&  -(\overline\Lambda \, \overline k \, \overline{\eta}^2+\overline{b} \, \overline{\beta}^2 ) \e^{p_1^*+p_2^*+p_3^*}<0
\end{array}
$$
and therefore
$$
\deg(\mathcal J Q \mathcal N, U \cap \ker \mathcal L, 0) =\sign(\det( D ( Q \mathcal N)(p_1^*,p_2^*,p_3^*)))=-1 \ne 0.
$$
\end{proof}

\section{Simulation}

In this section we undertake some simulation to illustrate our results. We let $\Lambda(t)=0.7(1+0.9\cos(\pi+2\pi t))$, $\mu(t)=0.6(1+0.9\cos(2\pi t))$, $c(t)=0.1$, $b(t)=0.3(1+0.7\cos(\pi+2\pi t))$, $r(t)=0.2(1+0.7\cos(2\pi t))$, $k(t)=0.9$, $\eta(t)=0.7(1+0.7\cos(\pi+2\pi t))$ and $\beta(t)=\gamma(1+0.7\cos(2\pi t))$, where $\gamma>0$ is a parameter. We solve the differential equation and compute $\mathcal R$ numerically using MATHEMATICA and present some outputs below. First, we let $\gamma=0.45$ and considered the sets of initial conditions at time $t=0$:  $\xi_{df}=(S(0),I(0),Y(0))=(1.152,0,0.669)$, $\xi_1= (2,0.2,0.5)$ and $\xi_2=(0.1,0.6,0.7)$. Initial condition $\xi_{df}$ corresponds to the disease-free periodic orbit $\mathcal O_2$. For this situation we plotted $S(t)$ (left), $I(t)$ (center) and $Y(t)$ (right) in figure~\ref{fig1}. Numerically we obtained  $\mathcal R \approx 0.926\le 1$ witch, according to Corollary 2 in~\cite{Niu-Zhang-Teng-AMM-2011}, guarantees the extinction of the infectives and according to Theorem~\ref{teo:DF4}, that the solutions must approach the disease-free solution. We can see that the plots are consistent with these conclusions.
\begin{figure}[h!]
    \begin{minipage}[b][4cm]{.3\linewidth}
    \includegraphics[scale=0.38]{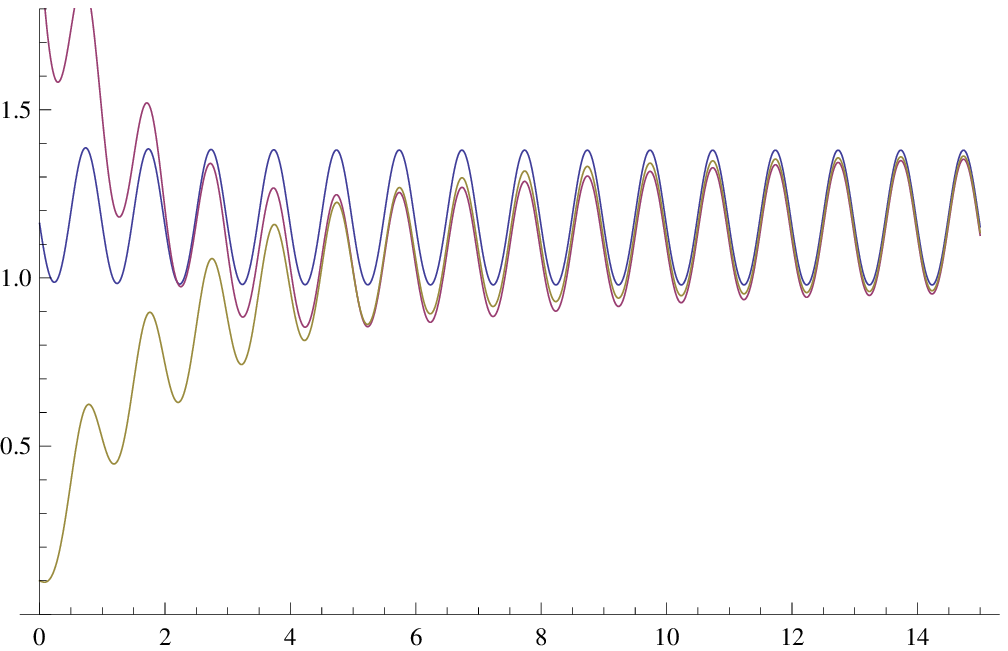}
  \end{minipage}
  \begin{minipage}[b][4cm]{.3\linewidth}
    \includegraphics[scale=0.38]{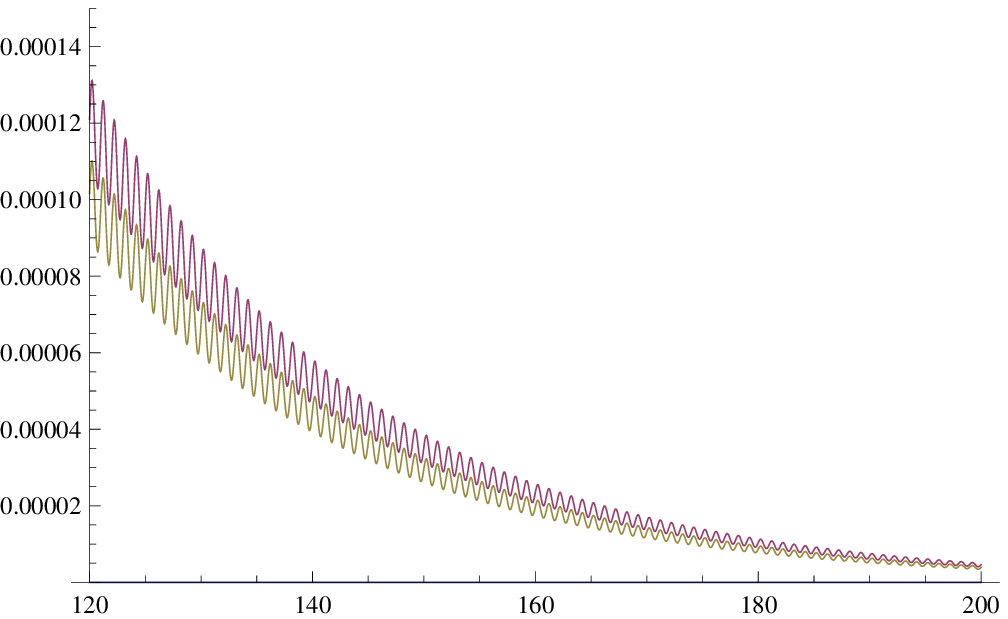}
  \end{minipage}
  \begin{minipage}[b][4cm]{.3\linewidth}
    \includegraphics[scale=0.38]{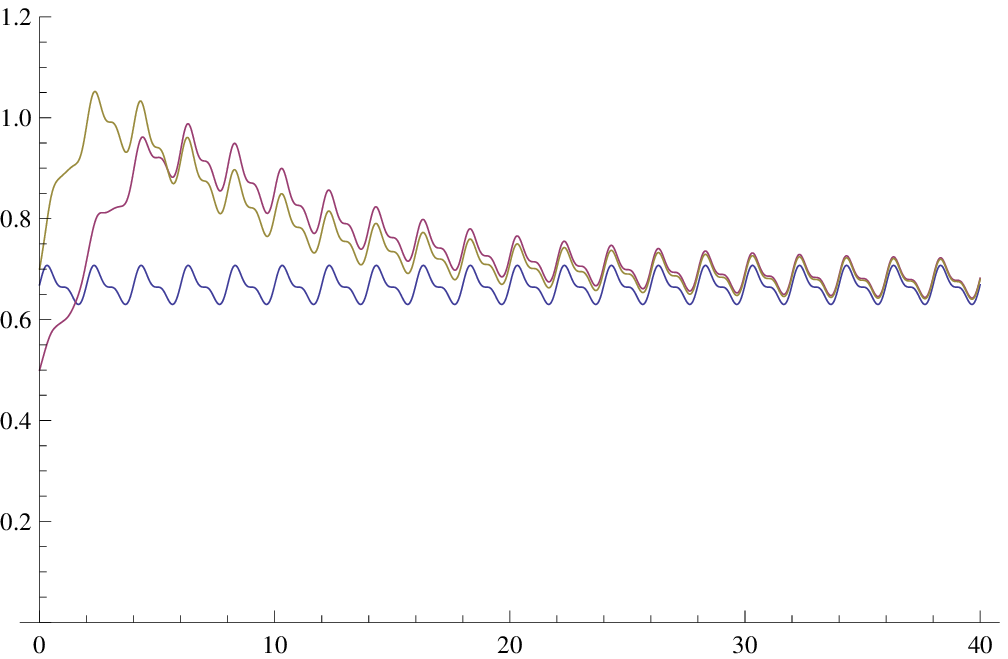}
  \end{minipage}
    \caption{Extinction}
      \label{fig1}
\end{figure}

Next, we let $\gamma=0.6$ and considered the sets of initial conditions at time $t=0$:  $\xi_e=(S(0),I(0),Y(0))=(1.082,0.065,0.799)$, $\xi_1= (2,0.2,0.5)$ and $\xi_2=(0.1,0.6,0.7)$. We plotted $S(t)$ (left), $I(t)$ (center) and $Y(t)$ (right) in figure~\ref{fig2}. Numerically we obtained  $\mathcal R \approx 1.238> 1$ witch, according to Corollary 1 in~\cite{Niu-Zhang-Teng-AMM-2011} implies the permanence of the infectives. We can see also that, for the initial condition $\xi_e$ we have an endemic periodic orbit, witch is consistent with the result in Theorem~\ref{teo:main}.
\begin{figure}[h!]
    \begin{minipage}[b][4cm]{.3\linewidth}
    \includegraphics[scale=0.38]{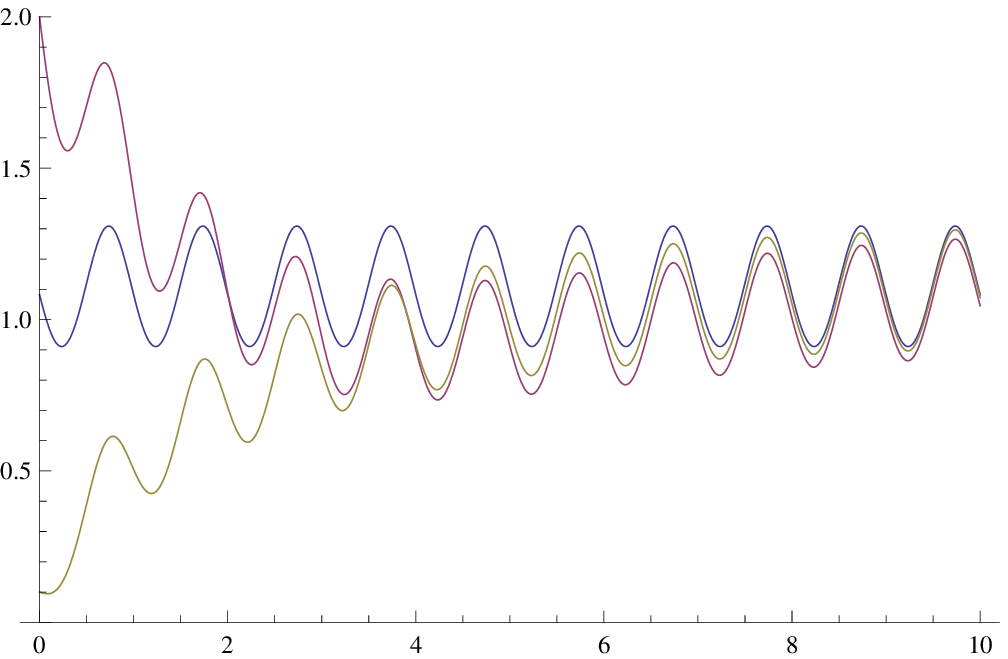}
  \end{minipage}
  \begin{minipage}[b][4cm]{.3\linewidth}
    \includegraphics[scale=0.38]{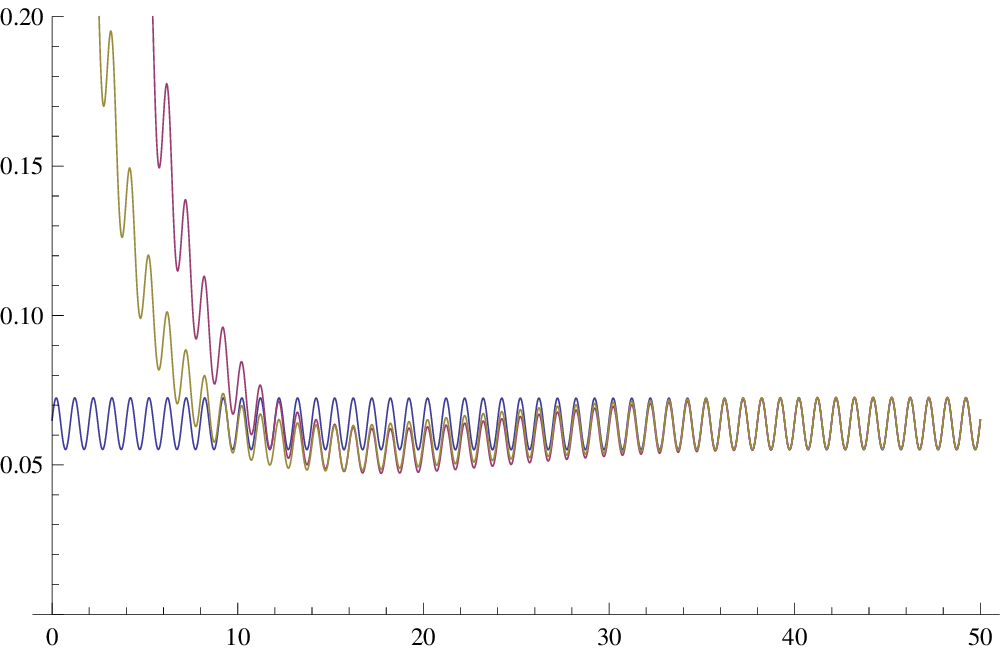}
  \end{minipage}
  \begin{minipage}[b][4cm]{.3\linewidth}
    \includegraphics[scale=0.38]{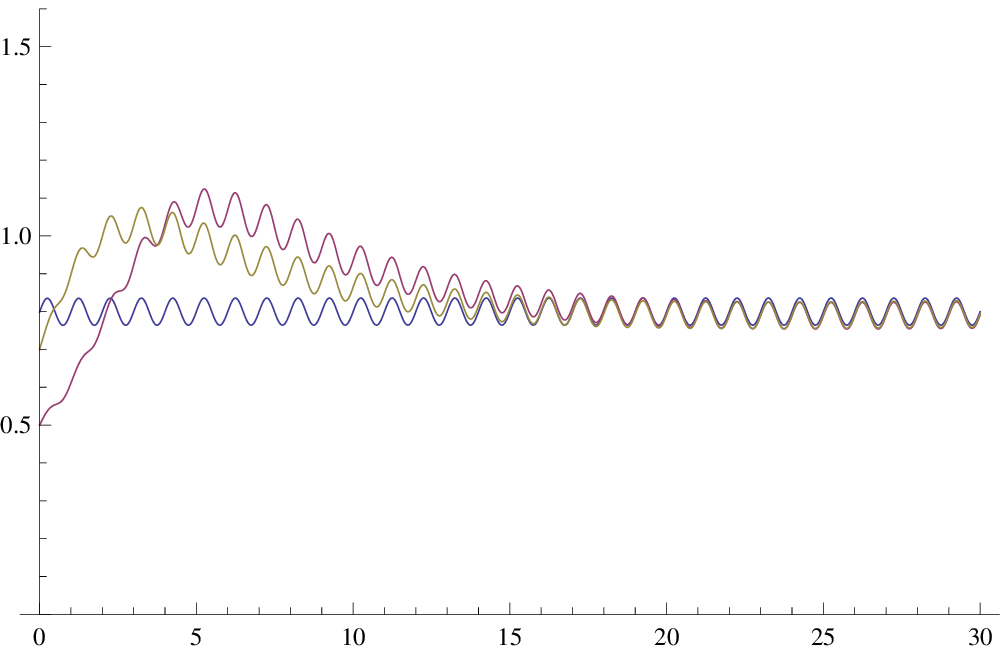}
  \end{minipage}
    \caption{Permanence}
      \label{fig2}
\end{figure}

\bibliographystyle{elsart-num-sort}

\end{document}